\newtheorem{Thm}{Theorem} 
\newaliascnt{Lem}{Thm}
\newaliascnt{Prop}{Thm}
\newtheorem{Prop}[Prop]{Proposition}
\numberwithin{equation}{section}
\renewcommand{\phi}{\varphi}
\newcommand{\C}{\operatorname{C}}
\newcommand{\N}{\operatorname{N}}
\newcommand{\Z}{\operatorname{Z}}
\newcommand{\pcore}{\operatorname{O}}
\newcommand{\GL}{\operatorname{GL}}
\newcommand{\SL}{\operatorname{SL}}
\mathchardef\ordinarycolon\mathcode`\:  
\title{Regular orbits of coprime linear groups\\ in large characteristic}
\author{Benjamin Sambale\footnote{Fachbereich Mathematik, TU Kaiserslautern, 67653 Kaiserslautern, Germany, \href{mailto:sambale@mathematik.uni-kl.de}{sambale@mathematik.uni-kl.de}}}
\date{\today}
\begin{document}
\frenchspacing
\maketitle
\begin{abstract}\noindent
We prove that a finite coprime linear group $G$ in characteristic $p\ge\frac{1}{2}(|G|-1)$ has a regular orbit. This bound on $p$ is best possible. We also give an application to blocks with abelian defect groups. 
\end{abstract}

\textbf{Keywords:} coprime linear groups, regular orbits, minimal subgroups\\
\textbf{AMS classification:} 20G40, 20C15

\section{Introduction}
Linear groups, i.\,e. subgroups of $\GL(V)$ where $V$ is a vector space, play an important role in many branches of group representation theory. In the present note we are interested in the situation where $V$ is an elementary abelian $p$-group and $G$ is a $p'$-group. Then by Maschke's Theorem, the action of $G$ on $V$ is semisimple. 
A crucial fact used in the solution of the so-called $k(GV)$-problem~\cite{kGVproblem} is that $G$ often has a \emph{regular orbit} on $V$, that is, an orbit of size $|G|$. This is well-known for abelian groups $G$ and many other special cases have been handled in the literature (see for instance \cite{Espuelas,Fleischmann,Hargraves,KoehlerPahlings,RegOrbits}).
It is also known that regular orbits exist if $p$ is “large”, for example if $p>|G|$ (see \cite[Lemma~2.2]{Robinson}). 
The main result of this paper establishes the existence of a regular orbit under the weaker condition $p\ge\frac{1}{2}(|G|-1)$. The proof relies on a classification of finite groups with “many” minimal subgroups given by Burness-Scott~\cite{BurnessStuart}.
On the other hand, for every odd prime $p$ we construct linear groups $G$ without regular orbits such that $p=\frac{1}{2}|G|-1$. 

These results are motivated by Brauer's $k(B)$-Conjecture which is still open even for blocks with abelian defect groups. In this situation, $V$ is a defect group of a block $B$ of a finite group and $G$ is the corresponding inertial quotient. The number of irreducible characters in $B$ is denoted by $k(B)$. 
By a result of Robinson~\cite{Robinson}, Brauer's $k(B)$-Conjecture, i.\,e. $k(B)\le|V|$, holds provided $G$ has a regular orbit on $V$.
In previous papers~\cite{Sbrauerfeit,SambaleC4,SambaleRank3} we have applied some of the techniques from the $k(GV)$-problem to this more general situation. Now in the present paper we will give new application of our main theorem.

\section{Main theorem}

In the following we denote a cyclic group of order $n$ by $C_n$. A dihedral group of order $n$ is denoted by $D_n$ and the symmetric and alternating groups of degree $n$ are $S_n$ and $A_n$ respectively. Moreover, for a finite group $G$ we set $G^n:=G\times\ldots\times G$ ($n$ copies). The exponent of $G$ is defined by $\exp(G):=\min\{n\ge 1:g^n=1\ \forall g\in G\}$. 
Note that the minimal subgroups of a finite group are precisely the subgroups of prime order.

\begin{Prop}[{\cite[Theorem~1.1]{BurnessStuart}}]\label{prop}
If a non-trivial finite group $G$ has more than $\frac{1}{2}|G|-1$ minimal subgroups, then one of the following holds:
\begin{enumerate}[(i)]
\item\label{one} $G\cong A\rtimes C_2$ where $A$ is abelian and $C_2$ acts as inversion.
\item $G\cong C_2^n\rtimes C_2$ for some $n\ge 2$.
\item $G\cong C_2^{2n}\rtimes C_3$ for some $n\ge 1$ (Frobenius group).
\item $G\cong D_8*\ldots*D_8\times C_2^n$ for some $n\ge 0$ (central product).
\item $\exp(G)=3$.
\item $G\cong D_8^2\times C_2^n$ for some $n\ge 0$.
\item $G\cong S_3\times D_8\times C_2^n$ for some $n\ge 0$.
\item $G\in\{S_3^2,\, S_4,\, A_5\}$.
\end{enumerate}
\end{Prop}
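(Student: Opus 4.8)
Since the statement is quoted verbatim from \cite[Theorem~1.1]{BurnessStuart}, the honest ``proof'' here is a pointer to that paper; what follows is a sketch of the kind of argument I would run to obtain a classification of this shape.

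The first step is to recast the hypothesis numerically. For a prime $q$ let $n_q$ be the number of elements of order $q$ in $G$; since a cyclic group of order $q$ has exactly $q-1$ generators, $G$ has $n_q/(q-1)$ subgroups of order $q$, and the total number of minimal subgroups equals $\sum_q n_q/(q-1)$. Thus involutions count with weight $1$, elements of order $3$ with weight $\frac{1}{2}$, elements of order $q\ge 5$ with weight at most $\frac{1}{4}$, and elements of non-prime order not at all. A short estimate then shows that if $G$ has no involutions the weighted sum cannot exceed $\frac{1}{2}(|G|-1)$, and that clearing the threshold $\frac{1}{2}|G|-1$ forces all but a bounded number of non-identity elements to have order $3$; for $|G|$ not tiny this already gives $\exp(G)=3$, which is case~(v). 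Hence from now on one may assume $G$ has even order and, in fact, that a large proportion of its elements are involutions (possibly together with a large proportion of elements of order $3$).

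The core of the proof is then the structure theory of finite groups with ``many'' involutions. Relevant ingredients are: the classical fact that a group more than three quarters of whose elements square to the identity is elementary abelian of exponent $2$; the description of minimal non-abelian groups (Miller--Moreno); the fact that a $2$-group generated by involutions with sufficiently small commutator structure is a central product of copies of $D_8$ with an elementary abelian group, which yields cases~(iv) and~(vi); and a direct analysis of the split extensions $A\rtimes C_2$ with $C_2$ inverting an abelian group $A$, of $C_2^n\rtimes C_2$, and of the Frobenius group $C_2^{2n}\rtimes C_3$, giving cases~(i)--(iii) and, in combination with a $D_8$-factor, case~(vii). I expect this structural classification, carried through with enough precision to keep the constant exactly at $\frac{1}{2}|G|-1$ rather than a cruder bound, to be the main obstacle: the extremal configurations, where the number of minimal subgroups only barely exceeds the threshold, are the delicate ones.

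Finally one collects the finitely many sporadic groups that survive all of these reductions --- namely $S_3^2$, $S_4$ and $A_5$ of case~(viii) --- and verifies the minimal-subgroup count in each directly, a finite check doable by hand or by machine. In the present paper this proposition is used only as a black box, so in practice I would simply cite \cite{BurnessStuart} and turn immediately to deducing the regular-orbit theorem from it, the point being that each group on the list either is abelian (so has a regular orbit automatically) or has bounded structure that can be treated case by case.
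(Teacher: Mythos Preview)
The paper gives no proof of this proposition at all: it is stated purely as a quotation of \cite[Theorem~1.1]{BurnessStuart} and used as a black box, exactly as you anticipate. Your proposal to cite the reference therefore matches the paper precisely; the heuristic sketch you append is additional exposition (and, as you say, only a sketch) that goes beyond anything the paper attempts.
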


Note that \eqref{one} in \autoref{prop} includes all non-trivial elementary abelian $2$-groups. Now in our main theorem we prove slightly more than what was promised in the introduction.

\begin{Thm}\label{main}
Let $G$ be a $p'$-automorphism group on a finite $p$-group $P$ such that $|G|\le 2p+9$. Then one of the following holds:
\begin{enumerate}[(i)]
\item $G$ has a regular orbit on $P$.
\item $G\cong D_{2p+2}$.
\item $G\cong D_8*C_4$ and $p=5$.
\end{enumerate}
\end{Thm}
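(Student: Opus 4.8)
\emph{The plan} is to reduce to a faithful $\mathbb{F}_pG$-module, to force $G$ to have many minimal subgroups by a covering estimate, to invoke \autoref{prop}, and to settle by hand the handful of orders for which \autoref{prop} is not quite strong enough. \emph{Reduction.} Since $\Phi(P)$ is characteristic in $P$, $G$ acts on $V:=P/\Phi(P)$, and coprime action gives $\C_G(V)=\C_G(P)=1$, so $V$ is a faithful $\mathbb{F}_pG$-module; a regular $G$-orbit on $V$ lifts to one on $P$ because $\C_G(x)\le\C_G(x\Phi(P))$. Hence it suffices to show that if a faithful $\mathbb{F}_pG$-module $V$ with $p\nmid|G|\le 2p+9$ has no regular $G$-orbit, then $G\cong D_{2p+2}$, or $G\cong D_8*C_4$ and $p=5$. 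Replacing $V$ by the direct sum of its pairwise non-isomorphic, non-trivial constituents preserves faithfulness, and a regular orbit on this smaller module produces one on $V$; so I may assume $V$ is multiplicity-free without a trivial summand, in particular $\dim V$ is bounded by the number of irreducible $\mathbb{F}_pG$-modules.

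\emph{Counting minimal subgroups.} Suppose $G$ has no regular orbit on $V$. For each $v\in V$ the group $\C_G(v)\ne 1$ contains an element of prime order, so $v\in\C_V(H)$ for some minimal subgroup $H\le G$; thus $V=\bigcup_H\C_V(H)$ is a union of proper subspaces over the finitely many minimal subgroups of $G$. If $\dim V=1$ then $G\le\GL_1(p)$ is cyclic and a non-zero vector lies in a regular orbit, which is absurd; so $\dim V\ge 2$, and if $k$ is the number of distinct subspaces occurring as some $\C_V(H)$ then $|V|\le 1+k(|V|/p-1)$ forces $k\ge p(|V|-1)/(|V|-p)>p$. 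Hence $G$ has at least $p+1$ minimal subgroups. If in addition $p\ge 5$ and every $\C_V(H)$ had codimension at least $2$, the same computation would give $k>p^2\ge|G|-1\ge k$, a contradiction; so some minimal subgroup acts with $\dim[V,H]=1$.

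\emph{Applying \autoref{prop}.} Write $m$ for the number of minimal subgroups of $G$; we have just shown $m\ge p+1$. If $m>\tfrac12|G|-1$, i.e.\ $|G|\le 2m+1$ — which holds whenever $|G|\le 2p+3$ — then \autoref{prop} puts $G$ into one of the families (i)--(viii). For each family I would determine the faithful $\mathbb{F}_pG$-modules (using $p\nmid|G|$) and check that $\bigcup_H\C_V(H)\ne V$: the fixed spaces $\C_V(H)$ are too few, and for the families in which $p$ is forced to be large in the relevant range also too small, to cover $V$. The only exception arises in family~(i) with $A\cong C_{p+1}$ inverted by $C_2$, i.e.\ $G\cong D_{2p+2}$ on its two-dimensional irreducible module, where the $p+1$ reflections fix the $p+1$ distinct lines of $\mathbb{F}_p^2$.

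\emph{The borderline range, and the main obstacle.} There remains $2p+4\le|G|\le 2p+9$, where $p+1\le m\le p+3$ and \autoref{prop} need not apply. Here I would let $R$ be the normal subgroup generated by the minimal subgroups $H$ with $\dim[V,H]=1$, observe that $R$ acts on $W:=[V,R]$ as a group generated by reflections, and refine the estimate of the second step to see that $R$ has on the order of $p$ distinct reflecting hyperplanes in $W$; an order comparison with the finite reflection groups of rank $\ge 3$ then forces $\dim W=2$, so $R\le\GL_2(p)$. Using Dickson's classification of the subgroups of $\GL_2(p)$, a $p'$-subgroup of order at most $2p+9$ that is generated by reflections and has enough reflecting lines to cover $\mathbb{F}_p^2$ must be $D_{2p+2}$ or, when $p=5$, $D_8*C_4$; since $[G:R]<2$ we get $G=R$, and as $|D_{2p+2}|=2p+2<2p+4$ the group $D_8*C_4$ with $p=5$ is the only possibility in this range. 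I expect this borderline analysis to be the main difficulty: the inequality $m\ge p+1$ falls just short of the hypothesis of \autoref{prop}, so one must extract the extra structural information — that the reflections generate a two-dimensional reflection group — before a classification can close the argument; by comparison, the per-family verification in the third step that all non-dihedral cases of \autoref{prop} have regular orbits is routine but long.
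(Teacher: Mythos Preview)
Your broad strategy---cover $V$ by fixed spaces of minimal subgroups, deduce $m\ge p+1$, then invoke \autoref{prop}---is exactly the paper's. But two pieces diverge from the paper in ways that create real gaps.

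\textbf{The reduction.} The paper takes a minimal counterexample and observes that every proper quotient $\overline G$ of $G$ has $|\overline G|<2p+2$, hence (by minimality) has a regular orbit on any module; this, together with the lemmas of Fleischmann, lets one reduce to an \emph{absolutely irreducible} $\mathbb{F}_qG$-module. Irreducibility then forces $\Z(G)$ cyclic and, for any minimal $M\unlhd G$, $\C_V(M)=0$. These two facts do almost all the work in the per-family check: they instantly discard the $C_2^n$ direct factors in families (iv), (vi), (vii), and in family~(i) they reduce the covering \eqref{union} to the $|A|$ involutions outside $A$, which is what pins down $G\cong D_{2p+2}$. Your multiplicity-free reduction gives neither cyclic centre nor $\C_V(M)=0$ for normal $M$, so the ``routine but long'' verification you anticipate is in fact not routine at all; you would have to re-establish something like irreducibility first.

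\textbf{The borderline $2p+4\le|G|\le 2p+9$.} The paper does not use reflection groups here; it splits on the parity and residue of $k=|G|-2p$. For $k$ odd, $|G|$ is odd and a short arithmetic argument on prime-order elements (Frobenius' extension of Sylow) bounds $m$ below $p+1$. For $k\in\{4,8\}$ one has $|G|\equiv 2\pmod 4$, hence a normal $2$-complement, and Ito's theorem forces $\dim V=2$ and $G$ dihedral. For $k=6$ a refined inclusion--exclusion (the inequality \eqref{union2}) shows all $\C_V(M_i)$ are hyperplanes, whence $|M_i|\mid p-1$ and $G$ is a $2$-group, which leads to $p=5$ and $G\cong D_8*C_4$. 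Your reflection-group route may be salvageable, but as written it has holes: you have not argued that the reflecting hyperplanes in $W=[V,R]$ actually cover $W$ (the covering you proved is of $V$, by \emph{all} minimal subgroups, and the non-reflections may carry the load on $W\times\{0\}$), and the ``order comparison with the finite reflection groups of rank $\ge 3$'' that is supposed to force $\dim W=2$ is asserted, not proved. Even granting $\dim W=2$, you still need to rule out that $G$ properly contains $R\cong D_{2p+2}$ acting trivially on $\C_V(R)$---your index bound $[G:R]<2$ only follows \emph{after} you know $|R|\ge 2p+2$, which in turn requires the classification of $R$ you are trying to justify.
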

\begin{proof}
Let $G$ be a minimal counterexample. By a result of Hartley-Turull~\cite[Lemma~2.6.2]{HartleyTurull}, we may assume that $P$ is elementary abelian. Then $G$ is non-abelian (see for example \cite[Corollary~4.I]{Fleischmann}).
If $\overline{G}$ is a proper quotient of $G$, then either $|\overline{G}|\le\frac{1}{3}|G|\le\frac{2}{3}p+3<2p+2$ or $p>2$ and $|\overline{G}|=\frac{1}{2}|G|\le p+\frac{9}{2}<2p+2$. In particular, every proper quotient of $G$ has a regular orbit. By \cite[Lemmas~2.I and 3.I]{Fleischmann} it suffices to assume that $P$ is an absolutely irreducible $G$-module over a finite field with $q=p^t$ elements. In particular, the center $\Z(G)$ of $G$ is cyclic. 
Let $\mathcal{M}$ be the set of minimal subgroups of $G$. Then for every $x\in P$ there exists $M\in\mathcal{M}$ such that $M\le\C_G(x)$. Hence,
\begin{equation}\label{union}
P=\bigcup_{M\in\mathcal{M}}\C_P(M).
\end{equation}
Since $G$ acts faithfully, $\C_P(M)<P$ for every $M\in\mathcal{M}$. This shows $m:=|\mathcal{M}|\ge 2$ and
\begin{equation}\label{sum}
|P|<\sum_{M\in\mathcal{M}}{\lvert\C_P(M)\rvert}\le m\frac{|P|}{q}.
\end{equation}
Therefore, $\frac{1}{2}(|G|-9)\le p\le q\le m-1$. 
Let $|G|=2p+k$ with $k\le 9$ (here $k$ may be negative). We discuss four cases.

\textbf{Case 1:} $k\in\{3,5,7,9\}$.\\
Here $G$ has odd order. Since two distinct minimal subgroups of $G$ intersect trivially, it follows that $m\le\frac{1}{2}(|G|-1)$. Equality can only hold if $G$ is a $3$-group. Then $p=2$ according to \cite[Theorem~1.II]{Fleischmann} and we easily get a contradiction. Hence, $m\le\frac{1}{2}(|G|-3)$. If we have equality this time, $G$ consists of elements of order $3$ and just one subgroup of order $5$. But then $G$ is nilpotent and must contain elements of composite order as well. This shows $m\le\frac{1}{2}(|G|-5)$. Since the number of subgroups of prime order $r$ is always congruent to $1$ modulo $r$ (Frobenius' extension of Sylow's Theorem), equality in this case leads to $G\cong C_7\rtimes C_3$. Of course, this is impossible and we conclude that $\frac{1}{2}(|G|-k)\le p\le m-1\le\frac{1}{2}(|G|-9)$ and $k=9$. However, it is not hard to see that there is no group with the desired number of minimal subgroups. 

\textbf{Case 2:} $k\in\{4,8\}$.\\
Now $G$ has even order and $p>2$. Hence, $|G|$ is not divisible by $4$. It is well-known that in this situation $G$ has a normal $2$-complement $N$. As usual, the number of minimal subgroups of $G$ inside $N$ is at most $\frac{1}{2}(|N|-1)<\frac{1}{4}|G|$. Let $S$ be a Sylow $2$-subgroup of $G$. Then the number of minimal subgroups of $G$ outside $N$ is $|G:\N_G(S)|$. 
If $S<\N_G(S)$, then $m<\frac{1}{4}|G|+\frac{1}{6}|G|=\frac{5}{12}|G|$.
Since $\frac{1}{2}|G|-4\le p\le m-1$, we derive that $|G|<36$. These cases have been handled in \cite[Proposition~14.9]{habil}.
Thus, let $S=\N_G(S)$. Then $S$ acts as inversion on $N$ and $N$ must be abelian. Ito's Theorem implies that $\dim P=2$ (see \cite[Theorem~6.15]{Isaacs}). 
Since $N\le G'\le\SL(2,q)$, $N$ is cyclic and so $G\in\{D_{2p+4},D_{2p+8}\}$. Both cases are excluded by \cite[Proposition~14.8]{habil}.

\textbf{Case~3:} $k=6$.\\
As before, $\frac{1}{2}|G|-3=p\le m-1$. Suppose first that $m=\frac{1}{2}|G|-2$. 
Let $\mathcal{M}=\{M_1,\ldots,M_{p+1}\}$ and $\lvert\C_P(M_i)\rvert=p^{d_i}$ with $d_1\ge\ldots\ge d_{p+1}$. 
Setting $|P|=p^n$ we refine \eqref{sum} to
\begin{equation}\label{union2}
|P|\le\sum_{i=1}^{p+1}\lvert\C_P(M_i)\rvert-\sum_{i=2}^{p+1}\lvert\C_P(M_1)\cap\C_P(M_i)\rvert\le p^{d_1}+(1-p^{d_1-n})\sum_{i=2}^{p+1}p^{d_i}\le p^n.
\end{equation}
This implies $d_1=\ldots=d_{p+1}=n-1$. 
In particular, every $M_i$ acts faithfully on $[M_i,P]\cong C_p$ and we deduce that $|M_i|\mid p-1$. Hence, $|M_i|\mid |G|-2(p-1)=8$ and $G$ is a $2$-group.
But then we may choose $M_i\le\Z(G)$ and obtain the contradiction $\C_P(M_i)=1$, since $\C_P(M_i)$ is $G$-invariant and $P$ is irreducible. This argument implies $m\ge\frac{1}{2}|G|-1$. 
If $G$ is still a $2$-group, $p$ must be a Fermat prime or a Mersenne prime by \cite[Theorem~2.II]{Fleischmann}. It follows easily that $p=5$ (a Fermat prime) and $|G|=16$. Here one can show with GAP~\cite{GAP48} that $G\cong D_8*C_4$ as given in the statement of our theorem.
Now suppose that $G$ is not a $2$-group. If $m=\frac{1}{2}|G|-1$, then, by \eqref{union2}, $G$ contains just one minimal subgroup of odd (prime) order. Since the number of involutions is always odd, we conclude that $|G|\equiv 2\pmod{4}$. But this gives the contradiction $p=2$. Hence, we may assume that $m\ge\frac{1}{2}|G|$. At this point we refer to the next case.

\textbf{Case~4:} $m\ge\frac{1}{2}|G|$.\\
First observe that this case includes the remaining possibility $k\le 2$, since $\frac{1}{2}(|G|-k)\le p\le m-1$. 
Moreover, $G$ is given as in \autoref{prop}. 
Since $\Z(G)$ is cyclic, some cases can be excluded immediately. If $G$ has odd order, then $m\le\frac{1}{2}(|G|-1)$ which is impossible. Therefore, $|G|$ is even and $p>2$. 

Suppose first that $G\cong A\rtimes C_2$ where $A$ is abelian and $C_2$ acts as inversion. 
Let $M$ be a minimal subgroup of $G$ lying inside $A$. Then $M\unlhd G$ and $\C_P(M)$ is $G$-invariant. It follows that $\C_P(M)=1$. Hence, in \eqref{union} we only need to consider the minimal subgroups outside $A$. 
This leads to $k\in\{2,6\}$ in \eqref{sum} after taking the last cases into account. By Ito's Theorem we have $\dim P=2$ and $\pcore_{2'}(A)\le\SL(2,q)$ is cyclic. Since also $\Z(G)$ is cyclic, $A$ contains at most one involution. This shows that $A$ is cyclic and $G\in\{D_{2p+2},D_{2p+6}\}$. The first case corresponds to the exception given in the statement of the theorem and the second case is excluded by \cite[Proposition~14.8]{habil}.

Next let $G\cong C_2^n\rtimes C_2$ with $n\ge 2$. Then $\lvert\Z(G)\rvert=2$ and \cite[Lemma~1.1]{Berkovich1} gives $|G:G'|=4$. A theorem of Taussky leads to $G\cong D_8$ (see \cite[Proposition~1.6]{Berkovich1}).
But then $p=3$ and $G\cong D_{2p+2}$.
Now let $G\cong C_2^{2n}\rtimes C_3$. Then Ito's Theorem implies $\dim P\le 3$ and the $2$-rank of $G$ is at most $3$ (see for example \cite[Proposition~7.13]{habil}). Consequently, $n=1$, $G\cong A_4$ and $p=5$. One can show that $A_4$ always has a regular orbit (see \cite[Proposition~14.9]{habil}).

Now we discuss the extraspecial group $G\cong D_8*\ldots*D_8$ with $n\ge 2$ factors (note that there are no elementary abelian direct summands, because $\Z(G)$ is cyclic). Then $p\ge\frac{1}{2}|G|-3\ge 13$ by the last cases. 
Let $z\in\Z(G)$ be the unique central involution. Then $\C_P(z)=1$, since $\C_P(z)$ is $G$-invariant. Hence, $z$ inverts the elements of $P$. The non-central involutions in $G$ can be paired in the form $\{x,xz\}$. Then $P=[x,P]\times\C_P(x)$ and $\C_P(x)\subseteq[xz,P]$. This yields $\lvert\C_P(x)\rvert\lvert\C_P(xz)\rvert\le|P|$ and \[\lvert\C_P(x)\rvert+\lvert\C_P(xz)\rvert\le\lvert\C_P(x)\rvert+\frac{|P|}{\lvert\C_P(x)\rvert}\le\frac{|P|}{p}+p.\] 
As in \eqref{sum}, we obtain 
\[|P|<\frac{m-1}{2}\Bigl(\frac{|P|}{p}+p\Bigr).\]
From \cite[Table~1]{BurnessStuart} it follows that $m\le\frac{3}{4}|G|$. 
In the case $|P|\ge p^3$ we derive the contradiction
\[p<\frac{m}{2}\Bigl(1+\frac{1}{p}\Bigr)\le\frac{3}{8}|G|\frac{14}{13}=\frac{21}{52}|G|\le\frac{1}{2}|G|-3.\]
Therefore, $|P|=p^2$ and $\dim P=2$. Then again $G$ has $2$-rank at most $3$ which contradicts $n\ge 2$.

In the case $\exp(G)=3$, $G$ has odd order which was already excluded.
Finally, the cases $G\in\{S_3\times D_8,S_3^2,S_4,A_5\}$ give $\frac{1}{2}|G|-3\le p\le m-1\le\frac{1}{2}|G|$ (see \cite[Table~1]{BurnessStuart}) and $k=2$. Again one can show that these groups have regular orbits (see \cite[Proposition~14.9]{habil}).
\end{proof}

\autoref{main} does not extend to $|G|=2p+10$. For example the semidihedral group $SD_{16}$ of order $16$ acts faithfully on $C_3^2$ without having regular orbits, since $16>9$. Similarly, $\texttt{SmallGroup}(24,8)$ is an automorphism group on $C_7^2$ without regular orbits. 
There is no doubt that one can classify these examples as well, but this becomes increasingly tedious and probably will not reveal new insights. 

One can check that the exception $G=D_8*C_4$ in \autoref{main} actually occurs. 
In the following we show that the other exceptions occur for every odd prime $p$.

\begin{Prop}\label{converse}
Let $p$ be an odd prime. Then there exists an automorphism group $G$ on a finite $p$-group $P$ such that $|G|=2p+2$ and $G$ has no regular orbit on $P$.
\end{Prop}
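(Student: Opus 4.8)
The plan is to realize the dihedral group $G\cong D_{2p+2}$ — the exceptional case (ii) of \autoref{main} — as a faithful group of $\mathbb{F}_p$-linear automorphisms of $P:=\mathbb{F}_{p^2}$, regarded as a $2$-dimensional vector space over $\mathbb{F}_p$ (so that $P\cong C_p^2$ as an abelian $p$-group), and then to show directly that no point of $P$ has trivial stabilizer. Since $\mathbb{F}_{p^2}^\times$ is cyclic of order $(p-1)(p+1)$, it contains a unique (cyclic) subgroup $A$ of order $p+1$. Letting $a\in A$ act on $P$ by multiplication $m_a\colon x\mapsto ax$, and adjoining the Frobenius automorphism $\sigma\colon x\mapsto x^p$ of $\mathbb{F}_{p^2}/\mathbb{F}_p$, I would check that $\sigma$ is $\mathbb{F}_p$-linear of order $2$, that it inverts $A$ (because $a^{p+1}=1$ gives $\sigma m_a\sigma^{-1}=m_{a^p}=m_{a^{-1}}$ for $a\in A$), and that $\sigma$ is not multiplication by any element of $A$ (it is not $\mathbb{F}_{p^2}$-linear, as $p$ is odd). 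Hence $G:=\langle m_a:a\in A\rangle\rtimes\langle\sigma\rangle\cong D_{2p+2}$ acts faithfully on $P$ with $|G|=2(p+1)=2p+2$, and this order is coprime to $p$ since $p$ is odd.

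For the absence of a regular orbit I would argue that every $x\in P$ is fixed by some non-identity element of $G$, i.e.\ $\C_G(x)\ne 1$. The point $0$ is fixed by all of $G$. For $x\ne 0$, no non-identity element of $A$ fixes $x$ (multiplication by $a\ne 1$ has no nonzero fixed vector), so it suffices to examine the coset $G\setminus A=\{m_a\sigma:a\in A\}$, each of whose $p+1$ elements is an involution since $(m_a\sigma)^2=m_am_{a^{-1}}=\mathrm{id}$. Now $m_a\sigma$ fixes $x\ne 0$ exactly when $ax^p=x$, that is, $a=x^{1-p}$. Since $y\mapsto y^{1-p}$ is a homomorphism $\mathbb{F}_{p^2}^\times\to\mathbb{F}_{p^2}^\times$ with kernel $\{y:y^{p-1}=1\}=\mathbb{F}_p^\times$ of order $p-1$, its image has order $(p^2-1)/(p-1)=p+1$ and hence equals $A$. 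In particular $x^{1-p}\in A$, so there is a (unique) element $a\in A$ with $m_a\sigma$ fixing $x$; thus $\C_G(x)\ne 1$ for every $x\in P$, and $G$ has no regular orbit.

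There is no serious obstacle here: the group is essentially forced by \autoref{main}, and the verification is a short computation in $\mathbb{F}_{p^2}$. The only point requiring a little care is to confirm that the $p+1$ reflections really do cover all nonzero vectors with their fixed spaces; this is also visible from the clean count $(p+1)(p-1)=p^2-1=|P\setminus\{0\}|$, which forces the fixed lines of the reflections to partition $P\setminus\{0\}$. It is worth remarking at the end that, combined with \autoref{main}, this example shows the bound $p\ge\frac12(|G|-1)$ to be best possible, since here $|G|=2p+2$ means precisely $p=\frac12|G|-1$.
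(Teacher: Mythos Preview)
Your proposal is correct and follows essentially the same construction as the paper: both realize $D_{2p+2}$ on $P=\mathbb{F}_{p^2}$ via the order-$(p+1)$ subgroup of $\mathbb{F}_{p^2}^\times$ together with the Frobenius map, and the key computation is the same (your $x^{1-p}\in A$ is exactly the paper's observation that $F(\gamma^i)=\gamma^i\cdot\gamma^{i(p-1)}$ lies in the $H$-orbit of $\gamma^i$). The only cosmetic difference is that the paper phrases the conclusion as ``$F$ fixes every $H$-orbit'' (so $G$-orbits have size at most $p+1$), whereas you phrase it as ``every nonzero vector is fixed by some reflection''; these are equivalent via orbit--stabilizer.
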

\begin{proof}
Let $P$ be a $2$-dimensional vector space over $\mathbb{F}_p$. We regard $P$ as the additive group of the field $\mathbb{F}_{p^2}$. Let $\gamma$ be a generator of $\mathbb{F}_{p^2}^\times$. Then $H:=\langle\gamma^{p-1}\rangle$ has order $p+1$ and acts by multiplication on $P$. Let $F$ be the Frobenius automorphism of $\mathbb{F}_{p^2}$. Then $F(\gamma^{p-1})=\gamma^{p^2-p}=\gamma^{1-p}$ and $G:=H\rtimes\langle F\rangle\cong D_{2p+2}$ acts faithfully on $P$. It suffices to show that $F$ fixes every orbit of $H$ on $P$. Any non-trivial element of $P$ has the form $\gamma^i$ for some $i\in\mathbb{Z}$. Then $F(\gamma^i)=\gamma^{ip}=\gamma^i\gamma^{i(p-1)}$. Hence, $\gamma^i$ and $F(\gamma^i)$ lie in the same orbit of $H$.
\end{proof}

For $p=2$ the smallest $p'$-automorphism group without regular orbits is the semilinear group $C_7\rtimes C_3$ acting on $C_2^3$. It is obvious from the group order that in this situation there are no regular orbits.

\section{Application}

In the next theorem we consider blocks of finite groups with respect to an algebraically closed field of characteristic $p>0$. We use the standard notation which can be found for example in \cite{habil}.

\begin{Thm}
Let $B$ be a $p$-block of a finite group with abelian defect group $D$ and inertial index $e\le 6p+5$. Then Brauer's $k(B)$-Conjecture holds for $B$, i.\,e. $k(B)\le|D|$.
\end{Thm}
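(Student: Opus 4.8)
The inertial quotient $E:=\N_G(D,b_D)/D\C_G(D)$ (for a Brauer correspondent $b_D$ of $B$) is a $p'$-group of order $e$, and since $D$ is abelian it acts faithfully on $D$, hence by coprimality also faithfully on $V:=D/\Phi(D)$. By Hartley--Turull \cite[Lemma~2.6.2]{HartleyTurull}, $E$ has a regular orbit on $D$ if and only if it does on $V$, and then $k(B)\le|D|$ by the result of Robinson quoted in the introduction. So the plan is to produce a regular orbit of $E$ on $V$ — up to a short list of explicit exceptions. The point of the hypothesis $e\le 6p+5$ is exactly that it lets \autoref{main} be applied to a point stabiliser: $\lfloor\tfrac{1}{3}(6p+5)\rfloor=2p+1$, which is strictly less than $|D_{2p+2}|=2(p+1)$ and, when $p=5$, less than $|D_8*C_4|=16$.

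\textbf{Reduction to a base of size $2$.} If $E$ is abelian a regular orbit exists classically \cite[Corollary~4.I]{Fleischmann}, so assume $E$ non-abelian. Then some $E$-orbit on $V$ has length $\ge 3$: otherwise every point stabiliser has index $\le 2$ and so contains $[E,E]E^2$, forcing $[E,E]E^2=1$ and making $E$ abelian. Choose $x\in V$ with $|E:E_x|\ge 3$. Then $E_x$ acts faithfully on $V$ with $|E_x|\le\lfloor\tfrac{1}{3}e\rfloor\le 2p+1\le 2p+9$, and $|E_x|$ is too small to equal $|D_{2p+2}|$ or $|D_8*C_4|$; so \autoref{main} yields $y\in V$ with $E_x\cap E_y=1$. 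Thus $E$ has a regular orbit on $V\oplus V$, i.e.\ a base of size $2$.

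\textbf{From a base of size $2$ to $k(B)\le|D|$; the obstacle.} To finish, replace $x,y$ by a single vector $x+\lambda y$ (scalars in the field of definition of $V$): for $1\ne g\in E$, $g$ fixes $x+\lambda y$ only if $(g-1)x=-\lambda(g-1)y$, which pins $\lambda$ down uniquely unless $g\in E_y$, and $g\in E_y$ would force $g\in E_x\cap E_y=1$. Hence $E_{x+\lambda y}=1$ for all but at most $|E|$ scalars $\lambda$, so a genuine regular orbit of $E$ on $V$ exists whenever $V$ is realised over a field with more than $|E|$ elements, and Robinson's result then gives $k(B)\le|D|$. What remains is a short explicit list of coprime linear groups of order $\le 6p+5$ defined over small fields and having no regular orbit on $V$ — at its core $E\cong D_{2p+2}$ acting on $\mathbb{F}_p^2$, together with a handful of further groups of semilinear type such as $SD_{16}$ for $p=3$ and \texttt{SmallGroup}(24,8) for $p=7$. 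For each of these, $k(B)\le|D|$ is already available from the analysis of blocks with such inertial quotients in \cite{Sbrauerfeit,SambaleC4,SambaleRank3} and in the case checks of \cite[Propositions~14.8 and~14.9]{habil}, where — the Schur multiplier of $E$ not being trivial in general, e.g.\ for $E\cong D_{2p+2}$ with $p$ odd — one works with the relevant twisted group algebra rather than with $D\rtimes E$ itself. I expect this last step to be the main obstacle: obtaining the base of size $2$ is clean, but pinning down the residual list and checking $k(B)\le|D|$ on it is the tedious part, as already flagged in the remark following \autoref{main}.
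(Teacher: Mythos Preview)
Your base-of-size-$2$ reduction is correct, but the last paragraph does not close the argument. The scalars $\lambda$ live in $\mathbb{F}_p$ (since $V=D/\Phi(D)$ is only an $\mathbb{F}_p$-module), so the scalar trick needs $p>e$, whereas the hypothesis allows $e$ up to $6p+5$; for the range $p<e\le 6p+5$ you have proved nothing beyond the existence of a base of size $2$. You then assert that the groups $E$ of order $\le 6p+5$ without a regular orbit form a ``short explicit list'', but no such classification is established (or known) --- the paper itself remarks after \autoref{main} that even pushing to order $2p+10$ ``becomes increasingly tedious''. Nor can you feed your pair $(x,y)$ into the block-theoretic lemmas you cite: those require either a vector whose full stabiliser in $E$ is cyclic of prime order, or a \emph{normal} subgroup of $E$ with a regular orbit, and your $E_x$ is merely a point stabiliser. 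So the ``tedious part'' you flag is not a residual bookkeeping step but the entire content of the theorem.

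The paper's argument avoids this by working with normal subgroups throughout. After disposing of $e\le 255$ (so $p\ge 43$), it sets $N:=I(B)\cap\SL(n,p)\unlhd I(B)$ and observes that for any minimal subgroup $M\le N$ the condition $\det M=1$ forces $\dim[D,M]\ge 2$, hence $|D:\C_D(M)|\ge p^2$. The covering inequality then reads $p^2<\lvert\mathcal{M}\rvert\le e\le 6p+5$, impossible for $p\ge 43$; so $N$ already has a regular orbit, and any $x$ in it has $\C_{I(B)}(x)$ embedding in the cyclic group $I(B)/N$. If $|I(B)/N|$ is $1$ or prime, \cite[Proposition~11]{SambaleC4} finishes; otherwise there is a normal $K\unlhd I(B)$ with $|I(B)/K|$ equal to $4$ or an odd prime, whence $|K|\le e/3<2p+2$, \autoref{main} gives $K$ a regular orbit (the exceptions being excluded by order), and \cite[Lemma~14.5]{habil} together with \cite[Corollary~12]{SambaleC4} applies. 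The two ingredients you are missing are the $\SL$-trick yielding the $p^2$ fixed-point bound, and the passage to a normal (rather than point-stabiliser) subgroup of index $\le 4$ so that the cited block-theoretic results become available.
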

\begin{proof}
By \cite[Theorem~14.13]{habil}, we may assume that $e>255$ and therefore $p\ge 43$.
The inertial quotient $I(B)$ of $B$ is a $p'$-group of order $e$ and acts faithfully on $D$. 
In order to find a large orbit of $I(B)$ on $D$, we may assume by \cite[Lemma~2.6.2]{HartleyTurull} that $D$ is elementary abelian of rank $n$. Then $I(B)\le\GL(n,p)$. Let $N:=I(B)\cap\SL(n,p)\unlhd I(B)$. We use the arguments from the proof of \autoref{main} to show that $N$ has a regular orbit on $D$. Suppose by way of contradiction that $N$ has no regular orbit. Let $\mathcal{M}$ be the set of minimal subgroups of $N$. Then $D=\bigcup_{M\in\mathcal{M}}{\C_D(M)}$. Since $N\le\SL(n,p)$, we have $\lvert[D,M]\rvert\ge p^2$ and $|D:\C_D(M)|\ge p^2$ for $M\in\mathcal{M}$. Consequently,
\[|D|<\sum_{M\in\mathcal{M}}\lvert\C_D(M)\rvert\le\lvert\mathcal{M}\rvert\frac{|D|}{p^2}\]
and $p^2<\lvert\mathcal{M}\rvert\le e\le 6p+5$. This contradicts $p\ge 43$. 

Hence, $N$ has a regular orbit, i.\,e. there exists $x\in D$ such that $\C_N(x)=1$. It follows that $\lvert\C_{I(B)}(x)\rvert\le|I(B)/N|$. Now if $|I(B)/N|$ is a prime or $1$, then the claim follows from \cite[Proposition~11]{SambaleC4}. In the remaining case there exists a normal subgroup $K\unlhd I(B)$ such that $|I(B)/K|$ is $4$ or an odd prime. We conclude that $|K|\le e/3<2p+2$. \autoref{main} implies that $K$ has a regular orbit on $D$. Now the claim follows from \cite[Lemma~14.5]{habil} and \cite[Corollary~12]{SambaleC4}.
\end{proof}

\section*{Acknowledgment}
I thank Julian Brough for making useful comments on a draft of this paper.
This work is supported by the German Research Foundation (project SA 2864/1-1) and the Daimler and Benz Foundation (project 32-08/13).

\end{document}